\newcommand{\monthyear}[1]{%
  \def\@monthyear{\uppercase{#1}}}
\newcommand{\volnumber}[1]{%
  \def\@volnumber{\uppercase{#1}}}
\def\ps@plain{\ps@empty
  \def\@oddfoot{\@monthyear \hfil \thepage}%
  \def\@evenfoot{\thepage \hfil \@volnumber}}
\def\ps@firstpage{\ps@plain}
\def\ps@headings{\ps@empty
  \def\@evenhead{%
    \setTrue{runhead}%
    \def\thanks{\protect\thanks@warning}%
    \uppercase{The Fibonacci Quarterly}\hfil}%
  \def\@oddhead{%
    \setTrue{runhead}%
    \def\thanks{\protect\thanks@warning}%
    \hfill\uppercase{Walking to infinity of the Fibonacci Sequence}}%
  \let\@mkboth\markboth
  \def\@evenfoot{%
    \thepage \hfil \@volnumber}%
  \def\@oddfoot{%
    \@monthyear \hfil \thepage}%
  }%
\theoremstyle{plain}
\numberwithin{equation}{section}
\newtheorem{thm}{Theorem}[section]
\newtheorem{theorem}[thm]{Theorem}
\newtheorem{lemma}[thm]{Lemma}
\newtheorem{definition}[thm]{Definition}
\newtheorem{corollary}[thm]{Corollary}
\begin{document}
\monthyear{April 2022}
\volnumber{}
\setcounter{page}{1}

\title{Walking to Infinity on the Fibonacci Sequence}
\author{Steven J. Miller}
\address{Department of Mathematics and Statistics\\
        Williams College\\
        Williamstown, MA\\
        01267, USA}
\email{sjm1@williams.edu}
\author{Fei Peng}
\address{Department of Mathematics\\
        National University of Singapore\\
        119077, Singapore}
\email{pfpf@u.nus.edu}
\author{Tudor Popescu}
\address{Department of Mathematics\\
        Brandeis University\\
        Waltham, MA\\
        02453, USA}
\email{tudorpopescu@brandeis.edu}
\author{Nawapan Wattanawanichkul}
\address{Department of Mathematics\\
        University of Illinois Urbana-Champaign\\
        Urbana, IL\\
        61820, USA}
\email{nawapan2@illinois.edu}

\thanks{The authors would like to thank the other Polymath REU Walking to Infinity group members in summer 2020 for their contributions to the work. The group consisted of William Ball, Corey Beck, Aneri Brahmbhatt, Alec Critten, Michael Grantham, Matthew Hurley, Jay Kim, Junyi Huang, Bencheng Li, Tian Lingyu, Adam May, Saam Rasool, Daniel Sarnecki, Jia Shengyi, Ben Sherwin, Yiting Wang, Lara Wingard, Chen Xuqing, and Zheng Yuxi. We would also like to extend our gratitude to the referee for helpful comments on an earlier draft.}

\begin{abstract}
An interesting open problem in number theory asks whether it is possible to walk to infinity on primes, where each term in the sequence has one more digit than the previous. In this paper, we study its variation where we walk on the Fibonacci sequence. We prove that all walks starting with a Fibonacci number and the following terms are Fibonacci numbers obtained by appending exactly one digit at a time to the right have a length of at most two. In the more general case where we append at most a bounded number of digits each time, we give a formula for the length of the longest walk.
\vspace{-3pc}
\end{abstract}

\maketitle

\section{Introduction}

In our previous study \cite{M}, we examined an open problem that asks if it is possible to construct an infinite sequence, or as we call it, \textit{a walk to infinity}, on primes where each term, or \textit{step}, has one more digit than the previous. If we start from a single-digit prime, this problem is the same as finding a \textit{right truncatable prime}, which is a prime that remains prime after removing the rightmost digits successively. It is known that the largest right truncatable prime is 73939133 \cite{AG}, so one cannot walk to infinity starting with a one-digit prime, i.e., the longest such walk is
$$\{7, 73, 739, 7393, 73939, 739391, 7393913, 73939133\}.$$

However, the problem remains open if we allow ourselves to start with a prime of any digits. We \cite{M} showed that it is impossible to construct such a walk on primes in bases 2, 3, 4, 5, and 6. For primes in base 10, although we could not solve the problem, we gave stochastic models to determine the expected value of the length of such walks. We further studied the same question for other number-theoretical sequences, namely, perfect squares and square-frees. Moreover, motivated by the \textit{Gaussian moat problem} \cite{GWW}, which asks whether one can walk to infinity on primes in the quadratic integer ring $\mathbb{Z}[i]$ with a bounded step size, we investigated walks on primes in $\mathbb{Z}[\sqrt{2}]$ and gave a conditional proof that such walks do not exist \cite{L}.

In this paper, we shift our attention to the well-known Fibonacci sequence and, again, ask \textit{whether one can walk to infinity on the Fibonacci sequence by appending one digit to the right each time}. The definition of the Fibonacci sequence \cite{Fibo} can be given as follows:
\begin{definition}[Fibonacci numbers]\label{def:Fibo} Let $F_n$ be the \textit{$n$-th Fibonacci number} and $F_0 = 0, F_1 =1$. For $n \ge 2$, $$F_{n}=F_{n-1}+F_{n-2}.$$
\end{definition}
Using the above recurrence relation, one can approximate the ratio of any consecutive Fibonacci numbers: let $\lim_{n\to\infty}F_{n}/F_{n-1} = x$. Dividing the recurrence relation by $F_{n-1}$ and taking $n$ to infinity gives
$$\lim_{n\to \infty}\frac{F_n}{F_{n-1}} = 1 + \lim_{n\to \infty}\frac{F_{n-2}}{F_{n-1}},$$
which implies $x= 1 +1/x$ hence $x \approx 1.618\ldots$, i.e., the golden ratio. In other words, the Fibonacci numbers grow exponentially while the growth of primes is linear times logarithm, i.e., the $n$-th prime is asymptotic to $n\log n$ resulting from the prime number theorem. Unlike in the case of primes, this significantly more rapid growth of the Fibonacci numbers allows us to resolve the problem, and the result can be stated as follows:

\begin{theorem} \label{thm:fibo1digit} It is impossible to construct an infinite walk on the Fibonacci sequence by appending exactly one digit at a time to the right. In particular, all such walks have a length of at most 2.
\end{theorem}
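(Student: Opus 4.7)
The plan is to exploit the exponential growth of the Fibonacci sequence against the essentially multiplicative-by-$10$ operation of appending a digit. Suppose $F_n \to F_m$ is a single step in such a walk, meaning $F_m = 10 F_n + c$ for some $c \in \{0, 1, \ldots, 9\}$. Writing $m = n + k$, the classical identity $F_{n+k} = F_k F_{n+1} + F_{k-1} F_n$ yields
$$\frac{F_{n+k}}{F_n} \;=\; F_k \cdot \frac{F_{n+1}}{F_n} + F_{k-1},$$
and since $F_m = 10 F_n + c$ forces the ratio $F_m/F_n$ to lie in the window $[10,\, 10 + 9/F_n]$, the proof reduces to pinning down which $k$ (and which $n$) can produce a ratio inside this window.

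First I would use the elementary bounds $1 \le F_{n+1}/F_n \le 2$ (valid for $n \ge 1$) to rule out all $k \ne 5$. For $k \le 4$: $F_{n+k}/F_n \le F_{n+4}/F_n \le 3 \cdot 2 + 2 = 8 < 10$, so $F_{n+k}$ falls short. For $k \ge 6$: $F_{n+k}/F_n \ge F_{n+6}/F_n \ge 8 \cdot 1 + 5 = 13$, which exceeds $10 + 9/F_n$ whenever $F_n \ge 4$, so $F_{n+k}$ overshoots. Only $k = 5$ survives, and here I would study the auxiliary sequence $c_n := F_{n+5} - 10 F_n = 5 F_{n+1} - 7 F_n$. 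A quick check shows that $c_n$ itself satisfies the Fibonacci recurrence $c_{n+2} = c_{n+1} + c_n$ with $c_1 = -2$ and $c_2 = 3$, giving $c_n = -2, 3, 1, 4, 5, 9, 14, 23, \ldots$ for $n = 1, 2, 3, \ldots$. From $n = 3$ onward $c_n$ is a sum of two positive terms, hence strictly increasing, and $c_7 = 14 > 9$; therefore $c_n > 9$ for every $n \ge 7$. Consequently no $F_n$ with $n \ge 7$ admits any successor.

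The remaining cases $n \in \{1, 2, \ldots, 6\}$ correspond to the single-digit Fibonacci numbers $1, 1, 2, 3, 5, 8$ and are handled by direct inspection: the unique Fibonacci number lying in $[10 F_n,\, 10 F_n + 9]$ in each case produces exactly the five walks $\{1, 13\}, \{2, 21\}, \{3, 34\}, \{5, 55\}, \{8, 89\}$. Since every such walk ends at some $F_m$ with $m \ge 7$, the preceding analysis forbids any further step, and the theorem follows. The main delicate point is that the margin is thin: $F_{n+5}/F_n$ tends to $\phi^5 \approx 11.09$, only slightly above $10$, so the window $[10,\, 10 + 9/F_n]$ really does contain Fibonacci numbers for small $F_n$, and it is precisely the explicit identity $F_{n+5} - 10 F_n = 5 F_{n+1} - 7 F_n$ that pins the cutoff at $n = 7$.
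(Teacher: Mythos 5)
Your proof is correct, and although it shares the paper's skeleton --- use growth estimates to force the successor of $F_n$ to be $F_{n+5}$ (or $F_{n+6}$ when $F_n$ is tiny), then an exact identity to bound the excess $F_{n+5}-10F_n$ --- your tools are genuinely different and more elementary. The paper proves $F_{k+1}F_m \le F_{m+k} \le F_{k+2}F_m$ by induction and derives $F_{m+5}=11F_m+F_{m-5}$ from Binet's formula, concluding $d = F_m + F_{m-5} \ge F_m$ and hence $F_m \le 9$. You instead invoke the addition formula $F_{n+k}=F_kF_{n+1}+F_{k-1}F_n$ together with $1 \le F_{n+1}/F_n \le 2$, and observe that $c_n := F_{n+5}-10F_n = 5F_{n+1}-7F_n$ itself satisfies the Fibonacci recurrence, so positivity of $c_3, c_4$ and $c_7 = 14 > 9$ finish the job with no irrational arithmetic; the two identities agree, since $c_n = F_n + F_{n-5}$ for $n>5$. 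Two harmless slips to tidy: $c_3 = c_2 + c_1 = 3+(-2)$ is not yet a sum of two positive terms, so strict increase only begins a step later (the conclusion $c_n > 9$ for $n \ge 7$ still stands); and ``only $k=5$ survives'' should be qualified as holding for $F_n \ge 4$, with $k=6$ still possible for $F_n \le 3$ --- your direct inspection of $n \le 6$ does cover this. Your explicit remark that each of the five length-2 walks ends at an index $\ge 7$ makes the termination step more transparent than the paper's.
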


Furthermore, we generalize this problem to appending \textit{at most} $N$ digit to the right each time and obtain the following theorem.

\begin{theorem}\label{thm:FiboAtmostN} Given we append at most $N$ digits to the right each time and the starting number contains $N_0 \ge 2$ digits, the length of the longest walk is then at most $\lfloor \log_2{\frac{N}{N_0-1}}\rfloor+2$. If $N_0=1$, the length of the longest walk is at most $\lfloor \log_2{N}\rfloor+2$.
\end{theorem}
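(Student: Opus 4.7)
My plan is to reduce the theorem to a single digit-doubling lemma and then finish with an elementary recurrence. Write the walk as $a_0, a_1, \ldots, a_{L-1}$ with $a_i = F_{k_i}$, let $d_i$ denote the number of base-10 digits of $a_i$, and set $j_{i+1} := d_{i+1} - d_i \in \{1, \ldots, N\}$ for the number of digits appended at step $i+1$; in particular $d_0 = N_0$.

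The key step I would establish is a digit-doubling lemma: for each consecutive pair $a_i, a_{i+1}$ in the walk with $d_i \ge 2$, the number of appended digits satisfies $j_{i+1} \ge d_i - 1$, equivalently $d_{i+1} \ge 2 d_i - 1$. Granting this, setting $e_i := d_i - 1$ and iterating yields $e_i \ge 2^i(N_0 - 1)$ whenever $N_0 \ge 2$. Since the walk actually makes the last step from $a_{L-2}$ to $a_{L-1}$, the upper bound $d_{L-1} \le d_{L-2} + N$ together with the lemma $d_{L-1} \ge 2d_{L-2} - 1$ force $d_{L-2} \le N + 1$, i.e.\ $e_{L-2} \le N$. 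Combining gives $2^{L-2}(N_0 - 1) \le N$, which rearranges to $L \le \lfloor \log_2(N/(N_0 - 1)) \rfloor + 2$. The $N_0 = 1$ case follows by applying the same argument to the sub-walk $a_1, \ldots, a_{L-1}$ with starting digit count $d_1 \ge 2$.

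To prove the lemma I would substitute Binet's formula $F_n = (\phi^n - \psi^n)/\sqrt{5}$ into the defining equation $a_{i+1} = a_i \cdot 10^{j_{i+1}} + b$ with $0 \le b < 10^{j_{i+1}}$, and isolate the dominant $\phi$-contribution. After absorbing the $\psi^k$ terms into the error, the constraint on $b$ becomes
\[
\bigl|\phi^{k_{i+1} - k_i} - 10^{j_{i+1}}\bigr| \ \le\ \frac{C \cdot 10^{j_{i+1}}}{\phi^{k_i}}
\]
for an explicit constant $C$. Taking logarithms shows that the linear form $(k_{i+1}-k_i)\log\phi - j_{i+1}\log 10$ has magnitude of order $\phi^{-k_i}$, and the $\Q$-linear independence of $\log\phi$ and $\log 10$ prevents it from being this small unless $k_i$ (and hence $d_i$) is small relative to $j_{i+1}$.

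The main obstacle will be converting the smallness of this linear form into the sharp inequality $j_{i+1} \ge d_i - 1$. To keep the argument elementary I would avoid invoking Baker's theorem in full generality and instead exploit the continued-fraction expansion of $\log_\phi 10 \approx 4.785$ to get an effective lower bound on the linear form, dispatching the handful of small-$d_i$ cases (say $d_i \le 3$) by direct inspection of the Fibonacci numbers of those digit lengths.
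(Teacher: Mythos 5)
Your combinatorial skeleton is exactly the paper's: the digit-doubling lemma $j_{i+1}\ge d_i-1$ is the paper's Corollary 2.7, and iterating $d_{i+1}\ge 2d_i-1$ against the cap $j_{L-1}\le N$ gives the stated bound. (A minor off-by-one: applying the $N_0\ge 2$ bound to the sub-walk $a_1,\ldots,a_{L-1}$ with $d_1\ge 2$ yields $L-1\le\lfloor\log_2 N\rfloor+2$, hence $L\le\lfloor\log_2 N\rfloor+3$, not the claimed $+2$.) The real problem is your proposed proof of the digit-doubling lemma, which is where all the content lives and which, as sketched, cannot be closed.

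Here is the gap. To deduce $j\ge d_i-1$ from $|\phi^{k}-10^{j}|\le C\cdot 10^{j}/\phi^{k_i}$ you need a lower bound on the linear form $k\log\phi-j\log 10$ of size roughly $10^{-j}$ valid for \emph{every} $j$, since the theorem quantifies over all $N$. A continued-fraction computation for $\log_\phi 10$ certifies only finitely many $j$; without Baker's theorem you cannot rule out a huge partial quotient making the form smaller than $10^{-j}$ for some large $j$. Moreover the danger is not an artifact of weak estimates: writing $\phi^k=(L_k+F_k\sqrt5)/2$ with $L_k=F_{k+2}-F_{k-2}$ the $k$th Lucas number, the norm identity $(\phi^k-10^j)(\bar\phi^{\,k}-10^j)=10^{2j}-10^jL_k+(-1)^k$ shows that $|\phi^k-10^j|$ is bounded below by an absolute constant ($\ge (10^j-1)/(10^j+1)$) \emph{unless} $L_k=10^j$, in which case it genuinely has order $10^{-j}$ and the lemma would fail. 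So your entire analytic argument bottlenecks at the integer equation $F_{k+2}-F_{k-2}=10^j$, an arithmetic fact that the continued fraction of $\log_\phi 10$ cannot see. The paper resolves precisely this point (its Lemma 2.5) by reducing the Fibonacci sequence mod $10$ and checking one Pisano period of length $60$; elsewhere it replaces your asymptotic estimate with the exact identity $F_{m+k}=(F_{k+2}-F_{k-2})F_m+(-1)^{k+1}F_{m-k}$, so that $F_{k+2}-F_{k-2}-10^j$ is an integer which is either $0$ (excluded) or of absolute value at least $1$, immediately forcing $F_m\le \frac{8}{7}(10^j-1)$ and hence $d_i\le j+1$. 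You should swap your Diophantine-approximation step for this integrality argument; as written, your plan defers the only hard step to a tool that cannot finish it.
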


\section{Proof of the Theorems \ref{thm:fibo1digit} and \ref{thm:FiboAtmostN}}

To prove our theorems, we first establish some relations between any two Fibonacci numbers with $k$ order apart, i.e., $F_m$ and $F_{m+k}$.

\begin{lemma}\label{lem:Fkbound} For all $m,k \in \mathbb{N}$, $$F_{k+1}F_{m} \ \le \ F_{m+k} \ \le \ F_{k+2}F_m.$$
\end{lemma}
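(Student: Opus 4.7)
The plan is to prove both bounds by induction on $k$ with $m$ held fixed, using the Fibonacci recurrence itself; this is clean and self-contained and avoids invoking outside identities. Equivalently (and perhaps more slickly) one can cite the standard convolution identity
\[
F_{m+k} \;=\; F_{k+1}F_m + F_k F_{m-1},
\]
from which the lemma is immediate: the lower bound comes from discarding the nonnegative term $F_k F_{m-1}$, and the upper bound comes from replacing $F_{m-1}$ by $F_m$ and using $F_{k+1}+F_k = F_{k+2}$. I would present the inductive version so the paper does not have to stop to prove the convolution identity.

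For the induction, first I would dispose of the base cases. For $k=0$ both inequalities collapse to $F_m \le F_m \le F_m$ since $F_1 = F_2 = 1$. For $k=1$ the claim reduces to $F_m \le F_{m+1} \le 2F_m$, which is $F_m \le F_m + F_{m-1} \le 2F_m$, i.e.\ the monotonicity $F_{m-1}\le F_m$. For the inductive step, assuming the bounds at levels $k-1$ and $k$, I would apply $F_{m+k+1} = F_{m+k} + F_{m+k-1}$ and add the two inductive inequalities:
\[
F_{m+k+1} \;\ge\; F_{k+1}F_m + F_k F_m \;=\; (F_{k+1}+F_k)F_m \;=\; F_{k+2}F_m,
\]
and symmetrically $F_{m+k+1} \le F_{k+2}F_m + F_{k+1}F_m = F_{k+3}F_m$. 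Both right-hand sides are exactly what the lemma demands at level $k+1$, closing the induction.

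There is essentially no obstacle; the step that needs the mildest care is the edge case $m=0$, where $F_0 = 0$ forces both sides of the claimed inequality to vanish, so the statement is either interpreted for $m\ge 1$ or handled separately, and the use of $F_{m-1}\le F_m$ requires $m\ge 1$ as well. Since every subsequent application in the paper uses $F_m$ with $m$ large enough to represent a multi-digit Fibonacci number, this restriction is harmless.
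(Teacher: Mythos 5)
Your proposal is correct and its primary argument—induction on $k$ with $m$ fixed, adding the inequalities at levels $k-1$ and $k$ and invoking the recurrence—is essentially identical to the paper's proof, differing only in the choice of base cases ($k=0,1$ versus the paper's $k=1,2$). The convolution identity $F_{m+k}=F_{k+1}F_m+F_kF_{m-1}$ you mention as an alternative would indeed give the bounds immediately, but since you elect to present the inductive version, the two proofs coincide.
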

\begin{proof} Let $m$ be any positive integer. We show that the statement is true for all $k \in \mathbb{N}$ by strong induction. For the base cases $k=1$ and $k=2$, by the recurrence relation in Definition \ref{def:Fibo} and the fact that the Fibonacci sequence is increasing, $F_{m} \le F_{m+1} \le 2F_{m}$ holds. Moreover, it follows that $2F_{m} \le F_{m+2} \le 3F_{m}$ by adding $F_m$ throughout the prior inequality.

Now, suppose that for all $k$ with $2 \le k \le r$, $F_{k+1}F_{m} \le F_{m+k} \le F_{k}F_m$. Taking $k = r-1$ and $r$ gives us
\begin{equation*} F_{r}F_{m} \ \le \ F_{m+r-1} \ \le \ F_{r+1}F_m
\end{equation*}
and
\begin{equation*}
F_{r+1}F_{m} \ \le \ F_{m+r} \ \le \ F_{r+2}F_m \end{equation*}
respectively. Thus, combining the above two inequalities yields
$$ F_{r-1}F_{m} + F_{r}F_{m}  \ \le \ F_{m+r-1}+ F_{m+r}  \ \le \  F_{r}F_{m} + F_{r+1}F_{m}.$$  Again, by the recurrence relation, the above inequality is equivalent to the lemma statement.
\end{proof}

\begin{lemma}\label{lem:Fkvalues} For all $m \ge k \in \mathbb{N}, k \ge 2$,  $$F_{m+k} = (F_{k+2} - F_{k-2})F_m + (-1)^{k+1}F_{m-k}.$$
\end{lemma}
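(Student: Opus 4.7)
The plan is to prove the identity by strong induction on $k$, with $m$ fixed throughout. Before starting the induction I would simplify the coefficient: two applications of the Fibonacci recurrence give $F_{k+2}-F_{k-2}=F_{k+1}+F_{k-1}$, which is the $k$th Lucas number $L_k$. So the identity to be proved is the classical
\[
F_{m+k} \ = \ L_k F_m + (-1)^{k+1} F_{m-k},
\]
and the induction is really just careful bookkeeping of signs together with the Fibonacci recurrence. I would verify the base cases $k=2$ and $k=3$ by unfolding directly: $F_{m+2}=F_{m+1}+F_m=2F_m+F_{m-1}=3F_m-F_{m-2}$ matches $(F_4-F_0)F_m - F_{m-2}$, and a similar unfolding gives $F_{m+3}=4F_m+F_{m-3}=(F_5-F_1)F_m+F_{m-3}$.

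For the inductive step I would assume the identity for indices $k-2$ and $k-1$ (so $k \ge 4$) and add them:
\[
F_{m+k-1} \ = \ (F_{k+1}-F_{k-3})F_m + (-1)^k F_{m-k+1},
\]
\[
F_{m+k-2} \ = \ (F_k - F_{k-4})F_m + (-1)^{k-1} F_{m-k+2}.
\]
The left-hand sides sum to $F_{m+k}$ by the Fibonacci recurrence. The $F_m$-coefficients collapse, by two more applications of the recurrence, to $(F_{k+1}+F_k)-(F_{k-3}+F_{k-4})=F_{k+2}-F_{k-2}$. The remaining Fibonacci terms combine, using $F_{m-k+2}=F_{m-k+1}+F_{m-k}$, into $(-1)^k(F_{m-k+1}-F_{m-k+2})=(-1)^{k+1}F_{m-k}$. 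Reassembling the three pieces yields the claim for index $k$.

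The only (modest) obstacle is keeping the alternating sign straight: the factors $(-1)^k$ and $(-1)^{k-1}$ coming from the two hypotheses must land on $F_{m-k+1}$ and $F_{m-k+2}$ with opposite signs so that the recurrence collapses them to a single signed $F_{m-k}$. The hypothesis $m \ge k$ guarantees that every Fibonacci index appearing on the right remains nonnegative throughout the argument, so no extension of the sequence to negative indices is needed. As a quick sanity check one could alternatively read the identity off in two lines from Binet's formula using $\phi\psi=-1$, via $L_k F_m = F_{m+k} - (-1)^k F_{m-k}$, but I would keep the inductive proof above for consistency with the style already used in Lemma \ref{lem:Fkbound}.
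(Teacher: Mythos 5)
Your proof is correct, but it takes a genuinely different route from the paper. The paper proves the identity by a direct computation with Binet's formula: it writes each Fibonacci number as $(\varphi^n - (-\varphi)^{-n})/\sqrt{5}$, factors the coefficient as $F_{k+2}-F_{k-2} = (\varphi^4-1)(\varphi^{k-2}+(-\varphi)^{-k-2})/\sqrt{5}$, and uses $\varphi^4 - 1 = \sqrt{5}\varphi^2$ to collapse everything to $F_{m+k}$. You instead observe that $F_{k+2}-F_{k-2} = F_{k+1}+F_{k-1} = L_k$ and run a strong induction on $k$ with base cases $k=2,3$; I checked the sign bookkeeping in your inductive step ($(-1)^kF_{m-k+1} + (-1)^{k-1}F_{m-k+2} = (-1)^{k+1}F_{m-k}$) and it is right, as is the coefficient collapse $(F_{k+1}+F_k)-(F_{k-3}+F_{k-4}) = F_{k+2}-F_{k-2}$. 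Your approach buys elementarity and stylistic consistency with the strong induction in Lemma \ref{lem:Fkbound}, avoiding irrationalities entirely, and it makes the Lucas-number structure of the coefficient visible; the paper's Binet computation is shorter once the algebraic identity for $\varphi$ is in hand and requires no case-splitting on base indices. One small slip in your closing aside: the Binet/Lucas form of the identity should read $L_kF_m = F_{m+k} + (-1)^kF_{m-k}$, not $F_{m+k} - (-1)^kF_{m-k}$ (e.g., $m=5$, $k=2$ gives $3\cdot 5 = 13 + 2$); this does not affect your inductive proof, which carries the correct sign $(-1)^{k+1}$ throughout.
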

\begin{proof}
Let $\varphi = (1 + \sqrt{5})/2$, the golden ratio. Using the supposition, we compute that
\begin{equation}\varphi^4 - 1 \ = \ \sqrt{5}\varphi^2. \label{eq:goldenration} \end{equation}
Furthermore, Binet's formula gives
\begin{equation} F_{n} \ = \ \frac{\varphi^n - (-\varphi)^{-n}}{\sqrt{5}},\label{eq:binet} \end{equation}
and hence
\begin{align*}
&(F_{k+2} - F_{k - 2})F_{m} + (-1)^{k + 1} F_{m-k}\\
& = \frac{(\varphi^{k+2} - (-\varphi)^{-k -2})-(\varphi^{k-2} - (-\varphi)^{-k+2}) }{\sqrt{5}}\cdot\frac{(\varphi^m - (-\varphi)^{-m})}{\sqrt{5}} + (-1)^{k + 1}\frac{(\varphi^{m - k} - (-\varphi)^{k - m})}{\sqrt{5}}\\
& = \frac{(\varphi^4 - 1)(\varphi^{k-2} + (-\varphi)^{-k - 2})}{\sqrt{5}}\cdot\frac{(\varphi^m - (-\varphi)^{-m})}{\sqrt{5}} + (-1)^{k + 1}\frac{(\varphi^{m - k} - (-\varphi)^{k - m})}{\sqrt{5}}.
\end{align*}
Thus, by \eqref{eq:goldenration}, we substitute $\varphi^4 - 1$ by $\sqrt5\varphi^2$ in the above expression and obtain
\begin{align*}
&\frac{\varphi^2(\varphi^{k-2} + (-\varphi)^{-k - 2})(\varphi^m - (-\varphi)^{-m})}{\sqrt{5}} + (-1)^{k + 1}\frac{(\varphi^{m - k} - (-\varphi)^{k - m})}{\sqrt{5}} \\
&= \frac{\varphi^{m + k} - (-\varphi)^{-m - k} + \varphi^{m - k} (-1)^{-k} - \varphi^{k - m} (-1)^{-m}}{\sqrt{5}} - \frac{(-1)^{k}\varphi^{m - k} - (-1)^{m} \varphi^{k - m}}{\sqrt{5}}\\
&= \frac{\varphi^{m + k} - (-\varphi)^{-m - k}}{\sqrt{5}},\end{align*}
which is exactly $F_{m+k}$ by \eqref{eq:binet}. \end{proof}

Now that we have Lemmas \ref{lem:Fkbound} and \ref{lem:Fkvalues}, we obtain our first theorem where we append exactly one digit at a time to the right.

\begin{proof}[Proof of Theorem \ref{thm:fibo1digit}] Starting with some Fibonacci number $F_m \ge 1$, if we append $d \in \{0,1,2, \dots, 9\}$, the newly appended number is then $10F_m + d$. From Lemmas \ref{lem:Fkbound} and \ref{lem:Fkvalues} respectively, the following statements hold.
\begin{equation} 5F_m  \le \ F_{m+4} \ \le \ 8F_m  \le \ F_{m+5} \ \le \ 13F_m \ \le \ F_{m+6} \  \le  21F_m, \label{eq:boundm+5} \end{equation}
and
\begin{equation} \text{For all,} \ m > 5 \in \mathbb{N}, \ F_{m+5} = 11F_m+F_{m-5}. \label{eq:valuesm+5}\end{equation}

Since $10F_m + d$ is a Fibonacci number and $8F_m+d \le 10F_m+d \le 13F_m +d$, \eqref{eq:boundm+5} implies that $10F_m + d$ is either $F_{m+5}$, or $F_{m+6}$ if $m$ is small.

\begin{itemize}
    \item If $10F_m + d = F_{m+6}$, we have that $10F_m + d \ge 13F_m$ and, thus, $d \ge 3F_m$. Since $d$ is a single-digit number, the possible values of $F_m$ are 1, 2 and 3.
    \item If $10F_m + d = F_{m+5}$, \eqref{eq:valuesm+5} tells us that if $m > 5$, $10F_m + d  = 11F_m +F_{m-5}$, so $d  = F_m +F_{m-5}$. Again, since $0 \le d \le 9$, $F_m \le 9$, and, as $m > 5$, the only possible value is $F_m =8$. Otherwise when $m \le 5$, $F_m$ is either $1, 2, 3, 5$ of $8$.
\end{itemize}
From both cases, we conclude that any walks must start from $F_m = 1, 2, 3, 5$, or $8$. This fact implies that there are only 5 possible walks of length 2, namely, $1 \rightarrow 13$, $2 \rightarrow 21$, $3 \rightarrow 34$, $5 \rightarrow 55$, and $8 \rightarrow 89$.
\end{proof}

Now, we apply the same technique to the case where we appended \textit{exactly} $N$ digits at a time to the right. By appending exactly $N$ digits, we also include appending $0$'s leading numbers, for example, $001$ or $0000002123$. This is an intermediate step towards Theorem \ref{thm:FiboAtmostN} where we append \textit{at most} $N$ digits each time. We first start from the following lemma.

\begin{lemma} \label{lem:10Nimpossible}
For all natural numbers $N$, there exists no natural number $k \ge 2$ such that $$F_{k+2}-F_{k-2} = 10^N.$$
\end{lemma}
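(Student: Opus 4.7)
The plan is to reduce the claim to a congruence obstruction modulo $5$: since $5 \mid 10^N$ whenever $N \ge 1$, it suffices to prove that $F_{k+2} - F_{k-2}$ is never divisible by $5$ for $k \ge 2$. (If the paper's convention also allows $N = 0$, the trivial bound $F_4 - F_0 = 3 > 1$ together with the obvious monotonicity of $F_{k+2} - F_{k-2}$ in $k$ handles that edge case separately.)

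First I would establish the telescoping identity
\[F_{k+2} - F_{k-2} \;=\; F_{k+1} + F_{k-1}, \qquad k \ge 2,\]
which follows at once from $F_{k+2} = F_{k+1} + F_k$ and $F_{k-2} = F_k - F_{k-1}$. Writing $G_k := F_{k+1} + F_{k-1}$ (the $k$-th Lucas number), termwise addition in the Fibonacci recurrence immediately gives $G_{k+1} = G_k + G_{k-1}$, so $(G_k)$ is itself a Fibonacci-type sequence and all of its arithmetic behavior is controlled by its first two values.

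Next I would verify directly that $G_k \not\equiv 0 \pmod{5}$ for every $k \ge 2$. Starting from $G_2 = 3$ and $G_3 = 4$ and iterating the recurrence modulo $5$, one obtains $G_4 \equiv 2$, $G_5 \equiv 1$, $G_6 \equiv 3$, $G_7 \equiv 4$, and so on, so the residue sequence is purely periodic with period $4$ and takes values in $\{1,2,3,4\}$. In particular it never hits $0$, which combined with the first paragraph rules out $F_{k+2} - F_{k-2} = 10^N$ for every $N \ge 1$.

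The argument is elementary; the main conceptual move is recognizing $F_{k+2} - F_{k-2}$ as a Lucas-type sequence, after which divisibility by $5$ becomes a finite, four-step check. I do not foresee a genuine obstacle, although one minor care point is to be explicit about the range of $k$ in the identity (since $F_{k-2}$ requires $k \ge 2$) so that the induction base cases $G_2, G_3$ line up correctly with the statement of the lemma.
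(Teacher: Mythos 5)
Your proof is correct, but it takes a genuinely different --- and slimmer --- route than the paper. The paper works modulo $10$: it establishes the Pisano-type periodicity $F_{60+n} \equiv F_n \pmod{10}$ by induction and then appeals to a finite check over the first $62$ Fibonacci numbers (via a published table) that $F_{k+2} - F_{k-2} \not\equiv 0 \pmod{10}$ ever occurs. You instead observe the identity $F_{k+2} - F_{k-2} = F_{k+1} + F_{k-1} = L_k$, the $k$-th Lucas number, and kill the equation modulo $5$: the Lucas residues mod $5$ cycle with period $4$ through $\{3,4,2,1\}$ starting from $(L_2, L_3) \equiv (3,4)$, so $5 \nmid L_k$, while $5 \mid 10^N$ for $N \ge 1$. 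Your computations check out ($L_4 = 7$, $L_5 = 11$, $L_6 = 18$, $L_7 = 29$, and $(L_6, L_7) \equiv (L_2, L_3) \pmod 5$ closes the cycle), and you correctly flag both the $N=0$ edge case and the $k \ge 2$ requirement for the identity. What your approach buys is a four-step verification in place of a sixty-two-entry table lookup, plus the conceptual payoff of identifying the quantity as a Lucas number (the fact that no Lucas number is divisible by $5$ is classical); what the paper's approach buys is that it needs no identity beyond the recurrence and generalizes mechanically to any modulus one might care about. Either proof is acceptable; yours is arguably the cleaner one.
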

\begin{proof} By observing the first 100 Fibonacci numbers \cite[Appendix~2, p.~585--588]{Kos}, we notice that $F_{61} \equiv F_{1}$ (mod $10$) and $F_{62} \equiv F_{2}$ (mod $10$). We show inductively that for any positive integer $n$, $F_{60+n} \equiv F_{n} \pmod{10}$. The case when $n=1$ or $n=2$ is established. As for the inductive step, if $F_{60+k} \equiv F_{k} \pmod{10}$ and $F_{60+(k+1)} \equiv F_{k+1} \pmod{10}$, then we have \[F_{60+(k+2)} = F_{60+k}+F_{60+(k+1)} \equiv F_{k}+F_{k+1} = F_{k+2} \pmod{10}.\]
The periodic property $F_{60+n} \equiv F_{n} \pmod{10}$ tells us that if there exists no pair of Fibonacci numbers $F_{k+2}$ and $F_{k-2}$, where $2 \le k \le 62$, such that $F_{k+2} - F_{k-2} \equiv 0$ (mod $10$), then there is no $k$ such that $F_{k+2}-F_{k-2} \equiv 0$ (mod $10$); as a result, it is impossible to have $F_{k+2}-F_{k-2} = 10^N$. This is because if there is no such a pair when $2 \le k \le 62$, neither does when $2+60m \le k \le 62+60m$, where $m \in \mathbb{N}$. Hence there are no possible pairs for any integer $k > 2$.

Going through the list of Fibonacci numbers again, we find no such pair in the first 62 Fibonacci numbers, which completes our proof.
\end{proof}

Lemma \ref{lem:10Nimpossible} then serves as an important tool to draw some conclusions in the next Lemma.

\begin{lemma}\label{lem:fiboNdigit} It is impossible to construct an infinite walk on the Fibonacci sequence by appending exactly $N$ digits at a time, where $N$ is a fixed positive integer. In particular, any appendable step in the walk must be of length at most $8/7\cdot (10^N-1)$.

(Note that, in this case, an appendable step refers to a step in a walk that we can append some $N$-digit number to the right and still get a Fibonacci. When a step is not appendable, the walk terminates.) \end{lemma}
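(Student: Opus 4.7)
The plan is to reduce the lemma to a single quantitative bound: any appendable step $F_m$ must satisfy $F_m \le \tfrac{8}{7}(10^N-1)$. Once this is established, only finitely many Fibonacci numbers can appear as interior terms of a walk, so any walk (whose every term other than the last is necessarily appendable) must terminate.

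Suppose $F_{m+k} = 10^N F_m + d$ for some $k \ge 1$ and $d \in \{0, 1, \dots, 10^N-1\}$. First I would use Lemma \ref{lem:Fkbound} to pin down $k$: combining $F_{m+k} \le F_{k+2} F_m$ with $F_{m+k} \ge 10^N F_m$ gives $F_{k+2} \ge 10^N \ge 10$, which forces $k \ge 5$ and therefore $F_{k+1} \ge F_6 = 8$. Then I would split according to how $F_{k+1}$ compares to $10^N$.

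If $F_{k+1} > 10^N$ (so by integrality $F_{k+1} \ge 10^N+1$), Lemma \ref{lem:Fkbound} alone gives $(F_{k+1}-10^N) F_m \le d \le 10^N-1$, hence $F_m \le 10^N-1$. If $F_{k+1} \le 10^N-1$ and $m < k$, the bound is even easier: $F_m \le F_{k-1} < F_{k+1} \le 10^N-1$. The delicate subcase is $F_{k+1} \le 10^N-1$ with $m \ge k$, where I would invoke Lemma \ref{lem:Fkvalues} to rewrite $F_{m+k} = L_k F_m + (-1)^{k+1} F_{m-k}$ with $L_k := F_{k+2} - F_{k-2}$. Equating with $10^N F_m + d$ and rearranging yields
\[
(L_k - 10^N) F_m + (-1)^{k+1} F_{m-k} \ = \ d.
\]
Lemma \ref{lem:10Nimpossible} is the decisive input here: it asserts $L_k \ne 10^N$, which by integrality upgrades to $|L_k - 10^N| \ge 1$. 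Using the bound $F_{m-k} \le F_m/F_{k+1}$ from Lemma \ref{lem:Fkbound} together with $F_{k+1} \ge 8$, the triangle inequality gives $10^N - 1 \ge d \ge F_m - F_m/F_{k+1} \ge \tfrac{7}{8} F_m$, from which $F_m \le \tfrac{8}{7}(10^N-1)$ follows.

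The main obstacle is precisely this inner subcase: a priori the coefficient $L_k - 10^N$ could be zero, in which case the identity would give no control on $F_m$ whatsoever. Lemma \ref{lem:10Nimpossible} is exactly what rules this out and provides the integer gap of size at least one on which the whole estimate pivots. The constant $\tfrac{8}{7}$ is simply the worst-case value of $F_{k+1}/(F_{k+1}-1)$, attained when $F_{k+1} = 8$ (i.e., $N = 1$); for larger $N$, the same argument automatically delivers a much tighter bound.
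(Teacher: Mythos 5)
Your proof is correct and follows essentially the same route as the paper's: Lemma \ref{lem:Fkbound} to pin down the index gap, Lemma \ref{lem:Fkvalues} for the exact identity, and Lemma \ref{lem:10Nimpossible} to secure the integer gap $|F_{k+2}-F_{k-2}-10^N|\ge 1$ that yields $F_m \le \tfrac{8}{7}(10^N-1)$. Your reverse-triangle-inequality step in fact unifies, more cleanly than the paper does, its separate subcases ($F_{k+2}-F_{k-2}\ge 10^N+2$ versus $=10^N+1$ with $k$ odd or even); the only slip is that your trichotomy omits the boundary value $F_{k+1}=10^N$, which your $m\ge k$ and $m<k$ arguments both cover verbatim since neither actually uses $F_{k+1}\le 10^N-1$.
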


\begin{proof} Let $N$ be a fixed positive integer and $F_m$ be the starting number of a walk. Similar to Theorem \ref{thm:fibo1digit} the next step in the walk can be written as \[10^NF_m+d, \text{ where } 0 \le d \le 10^N-1. \]
Now, let $k$ be such that $F_{k+1} \le 10^N$ and $F_{k+2} > 10^N$. By Lemma \ref{lem:Fkbound}, we have that
\begin{equation} F_{k+1}F_m \ \le \ F_{m+k} \ \le \ F_{k+2}F_m. \label{eq:boundN}
\end{equation}

Again, while the most likely case is when $10^NF_m+d = F_{m+k}$, there are two unlikely cases: $10^NF_m+d < F_{k+1}F_{m}$ and $10^NF_m+d > F_{k+2}F_{m}$.

If $10^NF_m+d < F_{k+1}F_{m}$, then \eqref{eq:boundN} fails because $F_{k+1} \le 10^N$ and $d$ is positive.

Now consider the case when $10^NF_m+d > F_{k+2}F_{m}$. Since $F_{k+2} > 10^N$ and hence $F_{k+2} \ge 10^N+1$, we have that $$ 10^NF_m +d \ > \ (10^N+1)F_m, $$ meaning that any appendable $F_m$ in the walk must be $\le d \le 10^N-1$.

 Lastly, for the most likely case when $10^NF_m +d = F_{m+k}$, from Lemma \ref{lem:Fkvalues}, we have \begin{align}\label{eq:Ndigit}
    10^NF_m +d  \ &= \ (F_{k+2}-F_{k-2})F_{m}+(-1)^{k+1}F_{m-k}\nonumber\\
    d \ &= \ (F_{k+2}-F_{k-2} -10^N)F_{m}+(-1)^{k+1}F_{m-k} \nonumber\\
    10^N-1  \ &\ge \ (F_{k+2}-F_{k-2} -10^N)F_{m}+(-1)^{k+1}F_{m-k}  \ \ge \ 0.
    \end{align}
    Thus, $F_{k+2} - F_{k-2} \ge 10^N$; otherwise, \eqref{eq:Ndigit} would not hold.

    If $F_{k+2} - F_{k-2} \ge 10^N+2$, by \eqref{eq:Ndigit}, we obtain that
    $$10^N-1 \ \ge \ 2F_m + (-1)^{k+1}F_{m-k} \ \ge \ F_m.$$
    Hence, any appendable $F_m$ in the walk must be $\le 10^N-1$.

    Now, we remain to examine the cases when $F_{k+2} - F_{k-2}$ is either exactly $10^N$ or $10^N+1$. If $F_{k+2} - F_{k-2} = 10^N$, from Lemma \ref{lem:10Nimpossible}, we know that this case is not possible. Otherwise, if $F_{k+2} - F_{k-2} = 10^N+1$, from \eqref{eq:Ndigit} and $0 \le d \le 10^N-1$, we have
    $$ 10^N-1 \ \ge \ F_m + (-1)^{k+1}F_{m-k} \ \ge \ 0. $$
    If $k$ is odd, $F_{m}$ has to be $\le 10^N-1$ just like the result we have had so far. However, if $k$ is even, we have $ 10^N-1 \ge F_m - F_{m-k} \ge 0$. Therefore,
    \begin{equation}F_m \ \le \ 10^N-1 + F_{m-k},\label{eq:Ndigit2}\end{equation}
    so if we can approximate an upper bound of $F_{m-k}$ in terms of $F_m$, we can find a bound for $F_m$. By our supposition $F_{k+2} - F_{k-2} = 10^N+1$, we have $k \ge 5$ because when $N=1$, $k=5$. Then, $F_{m-k} \le F_{m-5}  \le F_m/8$ by using the bounding technique in Lemma \ref{lem:Fkbound}. Therefore, we have that, $F_{m-k} \le F_m/8$, so, by \eqref{eq:Ndigit2}, $$10^N-1 \ \ge \ 7F_m/8 \quad\text{ or }\quad F_m \ \le \ 8/7\cdot (10^N-1).$$

Thus, since the bound $8/7\cdot (10^N-1)$ is greater than $10^N-1$, we conclude that any appendable step in the walk must be less than $8/7\cdot (10^N-1)$. This implies that any walk on Fibonacci must terminate as soon as the number is greater $8/7\cdot (10^N-1)$ given we append exactly $N$ digits each time.\end{proof}

\begin{corollary}\label{cor:fiboNdigit}
The implication of Lemma \ref{lem:fiboNdigit} is that any appendable step in a walk must contain at most $\lfloor\log(8/7\cdot (10^N-1))+1\rfloor = \lfloor 0.058+N+1\rfloor = N+1$ digits, given we append exactly $N$ digits each time. Since any number not greater than $8/7\cdot (10^N-1)$ will contain at least $N+1$ digits after appended by $N$ digits one time, we can append at most twice.
\end{corollary}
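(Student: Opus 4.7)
The plan is to convert the magnitude bound from Lemma \ref{lem:fiboNdigit} into a bound on the number of decimal digits, then iterate along successive appendings.

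First, I would apply Lemma \ref{lem:fiboNdigit}: every appendable Fibonacci $F_m$ satisfies $F_m \le \frac{8}{7}(10^N-1)$. To convert this into a digit count, I would use
$$\log_{10}\!\left(\frac{8}{7}(10^N-1)\right) \;<\; \log_{10}\!\left(\frac{8}{7}\cdot 10^N\right) \;=\; N + \log_{10}\!\left(\frac{8}{7}\right) \;<\; N+1,$$
so $\lfloor \log_{10} F_m\rfloor + 1 \le N+1$; that is, every appendable step has at most $N+1$ decimal digits.

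Second, I would chase the digit count through the walk. Since $0 \le d \le 10^N-1$, appending $N$ digits to a Fibonacci with $\ell$ digits produces $10^N F_m + d$ satisfying $10^{\ell+N-1} \le 10^N F_m+d < 10^{\ell+N}$, so the new number has exactly $\ell+N$ digits (no carry past the leading digit). Consequently the second term of any walk has at least $N+1$ digits; if it too is appendable it must have at most $N+1$ digits, hence exactly $N+1$. A hypothetical third term would then have $(N+1)+N = 2N+1$ digits, which for every $N\ge 1$ strictly exceeds the $N+1$ threshold, contradicting appendability. Therefore at most two appendings can occur, i.e., the walk has at most three terms.

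There is no serious obstacle: the only nontrivial arithmetic input is the inequality $\log_{10}(8/7) < 1$, equivalent to $8/7 < 10$, and the rest is a one-step iteration on digit counts.
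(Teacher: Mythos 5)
Your argument is correct and follows essentially the same route as the paper: convert the bound $F_m \le \tfrac{8}{7}(10^N-1)$ from Lemma \ref{lem:fiboNdigit} into the statement that an appendable step has at most $N+1$ digits, then observe that each appending adds exactly $N$ digits, so a third appending would require a term with at least $2N+1 > N+1$ digits. You merely spell out the digit-counting step (via $\log_{10}(8/7) < 1$) a bit more explicitly than the paper does.
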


\begin{corollary}\label{cor:fiboNdigit2} For any natural number $M \le N$, Corollary \ref{cor:fiboNdigit} says that we cannot append $M$ digits to the right when starting with a number of $N+2 \ge M+2$ digits because the starting point already has too many digits. In other words, if a Fibonacci number has at least $N+2$ digits, we cannot append $1, 2, \ldots,$ or $N$ digits to the right of that number to obtain another Fibonacci number.
\end{corollary}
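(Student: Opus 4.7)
The plan is to derive Corollary \ref{cor:fiboNdigit2} directly from Corollary \ref{cor:fiboNdigit}, with no new Fibonacci identities needed. The key observation is that Corollary \ref{cor:fiboNdigit} holds for \emph{every} positive integer, so I am free to apply it separately with the parameter replaced by each $M \in \{1, 2, \ldots, N\}$ and then take a union over $M$.

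Concretely, I would fix an arbitrary $M \in \{1, \ldots, N\}$ and suppose for contradiction that some Fibonacci number $F_m$ with at least $N+2$ digits admits an appendable step of exactly $M$ digits. Applying Corollary \ref{cor:fiboNdigit} with $M$ playing the role of $N$ forces $F_m \le 8/7\cdot(10^M - 1)$. Since $8/7\cdot(10^M - 1) < 10 \cdot 10^M = 10^{M+1}$, this says $F_m$ has at most $M+1$ digits. But $M+1 \le N+1 < N+2$, contradicting the assumed digit count of $F_m$.

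Ranging $M$ over $\{1, 2, \ldots, N\}$ in this argument rules out appending any number of digits from $1$ up to $N$, which is exactly what the corollary asserts. The only piece of real work is the mild digit-counting estimate $8/7\cdot(10^M - 1) < 10^{M+1}$, and this will be the sole place where one must be careful; everything else is a direct rephrasing of Corollary \ref{cor:fiboNdigit}. Because of how cleanly the digit bound already appears in that corollary, I do not anticipate a genuine obstacle — the proof amounts to observing that the contrapositive of "the starting number has at most $M+1$ digits" is "the starting number has at least $M+2$ digits," and then swallowing this inside the uniform hypothesis "at least $N+2$ digits" that the statement provides.
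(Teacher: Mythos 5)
Your proposal is correct and follows essentially the same route as the paper: the paper also treats Corollary \ref{cor:fiboNdigit2} as an immediate instance of Corollary \ref{cor:fiboNdigit} applied with each $M \le N$ in place of $N$, using the fact that an appendable step for $M$ appended digits has at most $M+1 \le N+1$ digits. Your explicit check that $8/7\cdot(10^M-1) < 10^{M+1}$ is just a restatement of the digit count the paper already computes in Corollary \ref{cor:fiboNdigit}, so there is no substantive difference.
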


\begin{proof}[Proof of Theorem \ref{thm:FiboAtmostN}]
Given that we start with a Fibonacci number $A_0$ that has $N_0$ digits. 
Corollary \ref{cor:fiboNdigit2} then implies that we cannot append $1, \dots, N_0-2$ digits to $A_0$. Thus, we can only append $N_0-1$ digits or above in the first appending. Then, after the first appending, the newly appended number, $A_1$, now contains at least $N_0+N_0-1 = 2N_0-1$ digits. Again, Corollary \ref{cor:fiboNdigit2} implies that we can only append $2N_0-2$ or above number of digits in the second appending.
Repeating the process above, we are required to append at least $2^{M-1}(N_0-1)$ digits at the $M$-th step. Hence, we can determine the largest $M$ as follows:
\begin{eqnarray*}
2^{M-1}(N_0-1) &\le& N \\
M &\le&\log_2{\frac{N}{N_0 -1 }}+1.
\end{eqnarray*}
Therefore, the length of the longest walk is at most $\lfloor\log_2{\frac{N}{N_0-1}}\rfloor+2$, including the starting number. Notice that this formula does not work for $N_0 = 1$ since we do not want to append $N_0-1 = 0$ digit. However, by similar analysis, we obtain that  $\lfloor\log_2{N}\rfloor+2$ is the length of the longest walk starting with a single-digit number.
\end{proof}

By exploiting several relationships among Fibonacci numbers, we conclude that there is no walk to infinity on the Fibonacci sequence, given we append at most $N$ digits at a time to the right. In addition, the length $\lfloor \log_2{\frac{N}{N_0-1}}\rfloor+2$ in Theorem \ref{thm:FiboAtmostN} suggests us that the length of any walk on Fibonacci is relatively small compared to $N$, a fixed positive integer.

\medskip

\noindent MSC2020: 11B39

\end{document}